\newtheorem{theorem}{Theorem}
\newenvironment{proof}{\begin{trivlist}
    \item[\hskip\labelsep{\it Proof.}]}{$\hfill\Box$\end{trivlist}}
\newcommand{\RR}{{\Bbb R}}
\newcommand{\NN}{{\Bbb N}}
\newcommand{\wal}{{\rm wal}}
\newcommand{\rd}{\,{\rm d}}
\title{Lebesgue constants for the Walsh system and the discrepancy of the van der Corput sequence}
\author{Josef Dick and Friedrich Pillichshammer}
\date{}
\begin{document}
\maketitle

\begin{abstract}
In this short note we report on a coincidence of two mathematical quantities that, at first glance, have little to do with each other. On the one hand, there are the Lebesgue constants of the Walsh function system that play an important role in approximation theory, and on the other hand there is the star discrepancy of the van der Corput sequence that plays a prominent role in uniform distribution theory. Over the decades, these two quantities have been examined in great detail independently of each other and important results have been proven. Work in these areas has been carried out independently, but as we show here, they actually coincide. Interestingly, many theorems have been discovered in both areas independently, but some results have only been known in one area but not in the other.
\end{abstract}

\paragraph{Lebesgue Constants.} For a given system of orthonormal functions $\Phi=\{\phi_k \ : \ k =0,1,\ldots\}$ in $L_2([a,b])$ the $n$-th Lebesgue function is defined as 
\begin{equation*}
L_n^{\Phi}(u):=\int_a^b \left|\sum_{k=0}^{n-1} \phi_k(x) \phi_k(u) \right| \rd x\qquad \mbox{for $u \in [a,b]$.}
\end{equation*}
It follows easily from the fact $S_n(f)(u)=\int_a^b \left(\sum_{k=0}^{n-1} \phi_k(x) \phi_k(u)\right) f(u)\rd u$ that 
\begin{equation*}
L_n^{\Phi}(u)=\sup_{\|f\|_{C([a,b])} \le 1} |S_n(f)(u)|\qquad \mbox{for $u \in [a,b]$,}
\end{equation*}
where $\|\cdot \|_{C([a,b])}$ is the uniform norm and $S_n(f)(u)=\sum_{k=0}^{n-1} \widehat{f}(k) \phi_k(u)$ is the $n$-th partial sum of the Fourier expansion of $f$ with respect to the system $\Phi$. If the $n$-th Lebesgue function $L_n^{\Phi}$ is constant over $[a,b]$, then its function value is called the $n$-th Lebesgue constant, which is denoted by $L_n^{\Phi}$ or simply by $L_n$. Lebesgue functions/constants are a fundamental tool in approximation theory; see, e.g., \cite{AT,KS,SWS,zyg}.

In this note we consider as an orthonormal system the Walsh functions and report a connection of the corresponding Lebesgue functions to the star discrepancy of the van der Corput sequence, which is an important quantity in uniform distribution and discrepancy theory. Both Lebesgue functions for the Walsh system and star discrepancy of van der Corput sequence are well-studied objects in literature, but so far it has not been observed that both quantities coincide. This is, at least for us, a very interesting connection between concepts from different branches of mathematics that allows to transfer results for one measure to the other and vice versa.

\paragraph{Walsh functions.} For a nonnegative integer $k$ with base $2$ representation $k=k_0+k_1 2+\cdots+k_m 2^m$ the $k$-th {\it Walsh function} $\wal_k:\RR \rightarrow \RR$, periodic with period one, is defined by 
\begin{eqnarray}
\wal_k(x):=(-1)^{x_1 k_0 +x_2 k_1+\cdots + x_{m+1} k_m},\nonumber
\end{eqnarray}
when $x \in [0,1)$ has (canonical) base $2$ representation $x=\frac{x_1}{2}+\frac{x_2}{2^2}+\cdots$. The system $\mathcal{W}=\{\wal_k \ : \ k=0,1,\ldots \}$ is a complete orthonormal system in $L_2([0,1])$. For information about Walsh functions see \cite{fine,SWS} or \cite[Appendix~A]{DP10}.

\iffalse 
By $\oplus $ we denote digit-wise addition modulo 2, ie., for
$x = \sum_{i=w}^{\infty} \frac{x_i}{2^i}$ and
$y = \sum_{i=w}^{\infty} \frac{y_i}{2^i}$ we have
$$x \oplus y := \sum_{i=w}^{\infty } \frac{z_i}{2^i} \; \; \;{\rm where}
\; \; \; z_i := x_i + y_i \pmod{2}.$$
We have
$$\wal_k(x) \cdot \wal_l(x) = \wal_{k \oplus l}(x), \; \; \;
\wal_k(x) \cdot \wal_k(y) = \wal_k(x \oplus y) $$
and
$$\wal_1(2^lx) = \wal_{2^l}(x).$$
\fi

In \cite[Section~5]{fine} Fine introduced the Walsh Dirichlet kernel $$D_n(x,u):= \sum_{k=0}^{n-1} \wal_k(x) \wal_k(u)$$ and the so-called Lebesgue functions $$L_n(x):=\int_0^1|D_n(x,u)| \rd u$$ (for simplicity we write $L_n(x)$ rather than $L_n^{\mathcal{W}}(x)$) for the Walsh systems and proved several interesting results. In the first place he proved that the $L_n(x)$ are in fact independent of $x$ (see \cite[p.386]{fine}) and for this reason they coincide with their Lebesgue constants $L_n$. In \cite[Theorem~IX]{fine} several properties of the Lebesgue constants $L_n$ are summarized.

\begin{theorem}[Fine]\label{thm:fine}
The Lebesgue constants $L_n$ of the Walsh system satisfy 
\begin{equation}\label{fo:LCW}
L_n=\nu-\sum_{1 \le j < i \le \nu} 2^{n_i-n_{j}},
\end{equation}
where $n$ is of the form $n=2^{n_1}+2^{n_2}+\cdots+2^{n_{\nu}}$ with integer exponents $n_1 > n_2 > \cdots > n_{\nu}\ge 0$. Furthermore, the following properties hold:
\begin{enumerate}
\item $L_{2n}=L_n$ and $L_{2n+1}=(1+L_n+L_{n+1})/2$.
\item $L_n=O(\log n)$.
\item $\frac{1}{n} \sum_{k=1}^n L_k =\frac{\log n}{4 \log 2}+O(1)$.
\item $\limsup_{n \rightarrow \infty} \left[L_n-\left(\frac{4}{9}+\frac{\log 3}{3 \log 2}+\frac{\log n}{3 \log 2}\right)\right]=0$.
\item the generating function for $L_n$ is $\sum_{n=1}^{\infty} L_n z^n =\frac{1}{2} \frac{z}{(1-z)^2} \sum_{k=0}^{\infty}\frac{1}{2^k} \frac{1-z^{2^k}}{1+z^{2^k}}$ for $|z|<1$.
\end{enumerate}
\end{theorem}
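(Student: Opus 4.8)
The plan is to work throughout with the single function $\phi_n:=\sum_{k=0}^{n-1}\wal_k$ on $[0,1)$, exploiting the group structure behind the Walsh system. First I would record the two structural facts $\wal_k(x)\wal_k(u)=\wal_k(x\oplus u)$ (with $\oplus$ digit-wise addition modulo $2$) and that $u\mapsto x\oplus u$ is a measure-preserving bijection of $[0,1)$. Together they give $D_n(x,u)=\phi_n(x\oplus u)$ and hence
\begin{equation*}
L_n(x)=\int_0^1|\phi_n(x\oplus u)|\rd u=\int_0^1|\phi_n(t)|\rd t,
\end{equation*}
which is independent of $x$; this recovers Fine's observation that $L_n(x)\equiv L_n$ and reduces everything to one integral.

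The computational heart is formula \eqref{fo:LCW}, which I would establish by decomposing $\phi_n$ along the expansion $n=2^{n_1}+\cdots+2^{n_\nu}$. Splitting $\{0,\dots,n-1\}$ into the $\nu$ dyadic blocks $[c_j,c_j+2^{n_j})$ with $c_j=2^{n_1}+\cdots+2^{n_{j-1}}$, the factorization $\wal_{c_j+\ell}=\wal_{c_j}\wal_\ell$ (the bits of $c_j$ and of $\ell<2^{n_j}$ are disjoint) together with the character identity $\sum_{\ell=0}^{2^m-1}\wal_\ell=2^m\,\mathbf 1_{[0,2^{-m})}$ yields
\begin{equation*}
\phi_n=\sum_{j=1}^{\nu}2^{n_j}\,\wal_{c_j}\,\mathbf 1_{[0,2^{-n_j})}.
\end{equation*}
Since the supports $[0,2^{-n_j})$ are nested, I would partition $[0,1)$ into the rings on which exactly the terms $j\ge r$ survive; there the integrand collapses to $|2^{n_r}\pm S_r|$ with $S_r=\sum_{j>r}2^{n_j}$, and the crucial inequality $S_r<2^{n_r}$ fixes all signs. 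A short measure count on each ring gives the contribution $1-2^{n_r-n_{r-1}}-S_r2^{-n_{r-1}}$, and summing over $r$ and reindexing the pairs turns the last two sums into $\sum_{1\le j<i\le\nu}2^{n_i-n_j}$, which is exactly \eqref{fo:LCW}.

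I would then read off the listed properties. Property 1 I would prove directly from $\phi_{2n}(t)=(1+(-1)^{t_1})\,\phi_n(2t\bmod 1)$ and $\phi_{2n+1}=\phi_{2n}+\wal_{2n}$: the even case gives $L_{2n}=L_n$ by substitution, and for the odd case the key point is that $\phi_n$ and $\phi_{n+1}$ differ only by $\wal_n=\pm1$, so as integer-valued functions they never take strictly opposite signs; hence $\int|\phi_n+\phi_{n+1}|=L_n+L_{n+1}$, which delivers $L_{2n+1}=(1+L_n+L_{n+1})/2$. Property 2 is then immediate from \eqref{fo:LCW}, since $0\le L_n\le\nu\le\log_2 n+1$. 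For Property 3 I would iterate the recursion over a dyadic range: with $A_M=\sum_{k<2^M}L_k$ one gets $A_{M+1}=2A_M+2^{M-1}+\tfrac12$, which solves to $A_M=2^M(\tfrac M4+\tfrac12)-\tfrac12$, giving the mean $\tfrac{\log n}{4\log 2}+O(1)$ (general $n$ by interpolation). Property 5 I would obtain by feeding Property 1 into $F(z)=\sum_{n\ge1}L_nz^n$ (with $L_0:=0$), producing the functional equation $F(z)=\tfrac{(1+z)^2}{2z}F(z^2)+\tfrac{z}{2(1-z^2)}$; one checks the claimed expression solves it using that $G(z):=\sum_k 2^{-k}\tfrac{1-z^{2^k}}{1+z^{2^k}}$ satisfies $G(z)=\tfrac{1-z}{1+z}+\tfrac12 G(z^2)$, after which uniqueness of the power-series solution finishes the proof.

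The main obstacle is Property 4, the sharp oscillation constant. The achieved value is a construction: for $n=(4^k-1)/3$ (binary $0101\cdots01$) formula \eqref{fo:LCW} gives $L_n=\tfrac{2k}{3}+\tfrac49+o(1)$ while $\log_2 n=2k-\log_2 3$, so $L_n-\tfrac{\log n}{3\log 2}\to\tfrac49+\tfrac{\log 3}{3\log 2}$. The difficulty is the matching upper bound $\limsup\bigl(L_n-\tfrac{\log n}{3\log 2}\bigr)\le\tfrac49+\tfrac{\log 3}{3\log 2}$: this is a genuine extremal problem over all digit patterns, amounting to showing that among regularly spaced bit configurations the spacing $d$ maximizing $\tfrac1d\cdot\tfrac{2^d-2}{2^d-1}$ is $d=2$ (value $\tfrac13$), that no irregular pattern does better, and that the lower-order terms are controlled uniformly. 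I expect this optimization, rather than any of the algebraic steps above, to demand the most care.
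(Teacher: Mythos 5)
The paper you are working from contains no proof of this theorem at all: it is quoted as Fine's result with a citation to \cite[Theorem~IX]{fine}, and within the note Items~1--4 would instead follow by combining Theorem~\ref{thm2} with the cited discrepancy results of B\'ejian--Faure and Faure. So your attempt is a from-scratch reconstruction, and most of it checks out. The reduction to $L_n=\int_0^1|\phi_n(t)|\,{\rm d}t$ via $\oplus$-invariance, the decomposition $\phi_n=\sum_{j=1}^{\nu}2^{n_j}\wal_{c_j}\boldsymbol{1}_{[0,2^{-n_j})}$, and the sign coherence on each ring are all sound; the last point is the one delicate step and it does hold: on the ring where exactly the terms $j\ge r$ survive, every digit of $t$ at a position $\le n_r$ vanishes, so $\wal_{c_j}(t)$ takes a single common value for all $j>r$, the integrand is $2^{n_r}\pm S_r$ with sign $(-1)^{t_{n_r+1}}$, and your ring contribution $1-2^{n_r-n_{r-1}}-S_r2^{-n_{r-1}}$ plus the reindexing of pairs gives \eqref{fo:LCW}. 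Items~1, 2 and 5 are likewise correct: the lemma that the integer-valued $\phi_n$ and $\phi_{n+1}=\phi_n+\wal_n$ can never have strictly opposite signs (their difference has modulus $1$) is exactly what the odd recursion needs, and the functional equation $F(z)=\frac{(1+z)^2}{2z}F(z^2)+\frac{z}{2(1-z^2)}$ together with uniqueness of its power-series solution verifies Item~5.

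There are, however, two genuine gaps. The serious one is Item~4: you prove only $\limsup\ge 0$, via the family $n=(4^k-1)/3$, and you explicitly leave the matching upper bound --- that no binary digit pattern asymptotically beats the constant $\frac49+\frac{\log 3}{3\log 2}$ --- as an unsolved optimization. Since the statement asserts the limsup \emph{equals} zero, that upper bound is the actual content of Item~4; your heuristic (equally spaced exponents with gap $d$ give $\frac1d\cdot\frac{2^d-2}{2^d-1}$, maximized at $d=2$) identifies the right mechanism but is not a proof over all patterns, so Item~4 remains unproven. In the discrepancy language this is precisely the content of \cite[Th\'eor\`eme~3]{befa77} and \cite[Lemme~4.1]{fau1990}. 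The smaller gap is in Item~3: your dyadic formula $A_M=2^M(\frac M4+\frac12)-\frac12$ is correct, but ``general $n$ by interpolation'' fails, because for $2^M\le n<2^{M+1}$ monotonicity of the partial sums only traps $\frac1n\sum_{k\le n}L_k$ between roughly $\frac M8+O(1)$ and $\frac M2+O(1)$, which does not pin down the constant $\frac1{4\log 2}$. The repair stays inside your framework: setting $B(n):=\sum_{k\le n}L_k$, Item~1 gives $B(2m)=2B(m)+\frac m2-\frac{L_m}{2}$ and $B(2m+1)=2B(m)+\frac{m+1}{2}+\frac{L_{m+1}}{2}$, hence $B(n)=2B(\lfloor n/2\rfloor)+\frac n4+O(\log n)$ using Item~2, and iterating this $\lfloor\log_2 n\rfloor$ times yields $B(n)=\frac{n\log_2 n}{4}+O(n)$ for all $n$, which is Item~3.
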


Further results on the Lebesgue constants for the Walsh system are shown in \cite{AS1,AS2}. 

\paragraph{Star discrepancy.}
For a sequence $X=(x_k)_{k \ge 0}$ in $[0,1)$ the $n$-th discrepancy function is defined as $$\Delta_{X,n}(t) :=\frac{\#\{k \in \NN_0\ : \ k < n,\ x_k \in [0,t)\}}{n}-t \qquad \mbox{for $t \in [0,1]$}.$$ The star discrepancy of the initial $n$ terms of $X$ is then defined as $$D_n^*(X):=\sup_{t \in [0,1]}|\Delta_{X,n}(t)|.$$ This is a quantitative measure for the irregularity of distribution of the sequence $X$ in $[0,1)$. The sequence $X$ is uniformly distributed in the sense of Weyl \cite{weyl} iff $\lim_{n \rightarrow \infty}D_n^{\ast}(X)=0$. The smaller $D_n^{\ast}(X)$, the better the $n$ initial terms of $X$ are distributed in $[0,1]$. For an introduction to uniform distribution and discrepancy, see, e.g., \cite{kuinie}.

\paragraph{Star discrepancy of the van der Corput sequence.}
The prototype of a uniformly distributed sequence is the van der Corput sequence (see \cite{vdc35} or \cite{FKP}) whose construction is based on the reflection of the binary digits of nonnegative integers $k$ around the binary point. If $k$ has binary expansion $k=k_0+k_1 2+k_2 2^2+\cdots$ (which is of course finite) with $k_j \in \{0,1\}$ for $j=0,1,2,\ldots$, then the $k$-th element $y_k$ of the van der Corput sequence $Y^{{\rm vdC}}=(y_k)_{k \ge 0}$ is given by 
\begin{equation*}
y_k :=  \frac{k_0}{2}+\frac{k_1}{2^2}+\frac{k_2}{2^3}+\cdots.
\end{equation*}

The star discrepancy of the van der Corput sequence is intensively studied in literature, see, e.g., \cite{befa,befa77,dlp04,fau1990,hab1966,spi}. See also the survey \cite{FKP} for many other references.

Consider the star discrepancy $D_n^{\ast}(Y^{{\rm vdC}})$ of the first $n$ terms of the van der Corput sequence and set $d_n:=n D_n^{\ast}(Y^{{\rm vdC}})$ to be the non-normalized star discrepancy. 

\begin{theorem}\label{thm2}
The $n$-th Lebesgue constant $L_n$ for the Walsh system is exactly the nonnormalized star discrepancy $d_n$ of the first $n$ terms of the van der Corput sequence, that is $$d_n=L_n.$$ 
\end{theorem}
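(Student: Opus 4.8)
The plan is to show that $d_n$ obeys the very same two-term recursion and initial value as Fine's Lebesgue constants in Theorem~\ref{thm:fine}(1), and then to conclude by induction. Write $A_n(t):=\#\{0\le k<n:\ y_k\in[0,t)\}$ and set $g_n(t):=A_n(t)-nt=n\,\Delta_{Y^{{\rm vdC}},n}(t)$, so that $d_n=\sup_{t\in[0,1]}|g_n(t)|$. Everything rests on the two self-similarity relations $y_{2k}=y_k/2$ and $y_{2k+1}=(1+y_k)/2$, which I would record first, together with the elementary step identity $g_{m+1}(t)=g_m(t)+\mathbf{1}[y_m<t]-t$.

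First I would prove that $g_n\ge 0$ on $[0,1]$, so that the negative part of $|g_n|$ is irrelevant and $d_n=\sup_t g_n(t)$. This follows by strong induction: the scaling relations give, on $[0,\tfrac12]$, $g_{2n+1}(t)=g_{n+1}(2t)+t$ and, on $[\tfrac12,1]$, $g_{2n+1}(t)=g_n(2t-1)+(1-t)$, both nonnegative once $g_n,g_{n+1}\ge 0$, and the even case is analogous and simpler. The same computation yields $g_{2n}(t)=g_n(2t)$ on $[0,\tfrac12]$ and $g_{2n}(t)=g_n(2t-1)$ on $[\tfrac12,1]$, exhibiting $g_{2n}$ as two rescaled copies of $g_n$; hence $d_{2n}=\sup_t g_{2n}(t)=\sup_s g_n(s)=d_n$, which matches $L_{2n}=L_n$.

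The heart of the matter is the odd case. Averaging the two instances $g_{2n+1}=g_{2n}+\mathbf{1}[y_{2n}<t]-t$ and $g_{2n+1}=g_{2n+2}-\mathbf{1}[y_{2n+1}<t]+t$ of the step identity, and using $y_{2n}=y_n/2<\tfrac12\le(1+y_n)/2=y_{2n+1}$, gives the clean decomposition
\begin{equation*}
g_{2n+1}(t)=\tfrac12\bigl(g_{2n}(t)+g_{2n+2}(t)\bigr)+\tfrac12\,\mathbf{1}_{(y_{2n},\,y_{2n+1}]}(t).
\end{equation*}
Since $g_{2n},g_{2n+2}\ge 0$ have suprema $d_n$ and $d_{n+1}$ (even case applied to $2n$ and $2n+2$) and the indicator is at most $1$, the upper bound $d_{2n+1}\le\tfrac12(1+d_n+d_{n+1})$ is immediate. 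For the matching lower bound I would use the branchwise substitution $s=2t$ on $[0,\tfrac12]$ and $s=2t-1$ on $[\tfrac12,1]$ to see that, restricted to $(y_{2n},y_{2n+1}]$, one has $g_{2n}(t)=g_n(s)$ and $g_{2n+2}(t)=g_{n+1}(s)$ simultaneously, with $s$ sweeping all of $(0,1]$ (the complementary part of $[0,1]$ sweeps $(0,1]$ in the same way but without the extra $\tfrac12$, so the supremum is attained on this interval). Consequently $d_{2n+1}=\tfrac12\sup_s\bigl(g_n(s)+g_{n+1}(s)\bigr)+\tfrac12$.

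Everything thus reduces to the single structural fact that $g_n$ and $g_{n+1}$ attain their maxima at a common point, i.e. $\sup_s\bigl(g_n(s)+g_{n+1}(s)\bigr)=d_n+d_{n+1}$, and this is the step I expect to be the main obstacle. I would establish it by a separate strong induction: the scaling relations show that the maximizers of $g_{2m}$ and of $g_{2m+1}$ are the half-scaled lifts of maximizers of $g_m$, respectively of common maximizers of $g_m$ and $g_{m+1}$, so a common maximizer of $g_m,g_{m+1}$ propagates to one of $g_n,g_{n+1}$ for both $n=2m$ and $n=2m+1$. The delicate bookkeeping is tracking which of the two lifts $s^{*}/2$ or $(1+s^{*})/2$ is selected (according to whether the inherited maximizer $s^{*}$ lies to the right or to the left of $y_m$) and handling the one-sided limits at the jump points; the base case is immediate since $g_1$ and $g_2$ share the maximizer approached as $t\to 0^{+}$. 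Granting this, the odd recursion $d_{2n+1}=\tfrac12(1+d_n+d_{n+1})$ holds, and since $d_1=1=L_1$, the sequences $(d_n)$ and $(L_n)$ satisfy the identical recursion of Theorem~\ref{thm:fine}(1); hence $d_n=L_n$ for all $n$ by induction.
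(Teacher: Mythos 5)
Your argument is correct, but it takes a genuinely different route from the paper's proof. The paper never touches the recursion in Item~1 of Theorem~\ref{thm:fine}; instead it eliminates the supremum altogether by invoking two cited facts --- that the discrepancy function of the van der Corput sequence is nonnegative, and that the star discrepancy equals twice the $L_1([0,1])$-norm of the discrepancy function --- so that $d_n = 2n\int_0^1 \Delta_{Y^{\rm vdC},n}(t)\rd t = 2\sum_{\ell=0}^{n-1}(\tfrac12-y_\ell)$, a sum it evaluates in closed form from the block structure of the first $n=2^{n_1}+\cdots+2^{n_\nu}$ points and matches against Fine's explicit formula \eqref{fo:LCW}. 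You instead derive the recursion $d_{2n}=d_n$, $d_{2n+1}=\tfrac12(1+d_n+d_{n+1})$ directly from the self-similarity $y_{2k}=y_k/2$, $y_{2k+1}=(1+y_k)/2$ and induct from $d_1=L_1=1$, matching Fine's recursion rather than his closed formula. The price of your route is exactly the step you flagged: since in general $\sup(f+g)\le \sup f+\sup g$, the lower bound in the odd case needs the sup-additivity $\sup_s(g_n(s)+g_{n+1}(s))=d_n+d_{n+1}$. Your lifting argument does close this gap, and in fact more cleanly than your sketch suggests: if $(s_j)$ is a common maximizing sequence for $(g_m,g_{m+1})$, then its lifts $t_j\in(y_{2m},y_{2m+1}]$ (namely $s_j/2$ when $s_j>y_m$ and $(1+s_j)/2$ when $s_j\le y_m$) satisfy $g_{2m}(t_j)=g_m(s_j)$, $g_{2m+2}(t_j)=g_{m+1}(s_j)$ and $g_{2m+1}(t_j)=\tfrac12\bigl(g_m(s_j)+g_{m+1}(s_j)\bigr)+\tfrac12$ simultaneously, so the single sequence $(t_j)$ is maximizing for all three of $g_{2m},g_{2m+1},g_{2m+2}$; this settles both pairs $(2m,2m+1)$ and $(2m+1,2m+2)$ at once, and the strong induction (carrying the recursion and the sup-additivity together) goes through, with the base pair $(g_1,g_2)$ handled by $t\to 0^+$. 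One point of rigor: state the lemma in terms of maximizing sequences (or one-sided limits), not attained maxima, since $g_n$ is only left-continuous and its supremum need not be attained (already $g_1(t)=1-t$ on $(0,1]$ shows this). As for what each approach buys: the paper's computation is a few lines and produces the closed form $d_n=\nu-\sum_{1\le j<i\le\nu}2^{n_i-n_j}$ as a byproduct, but leans on two external results; yours is longer and must confront the common-maximizer issue, but it is self-contained on the discrepancy side (you even reprove nonnegativity via the same self-similarity) and uses only the recursive, not the explicit, part of Fine's theorem.
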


\begin{proof}
Let $n \in \NN$ be of the form $n=2^{n_1}+2^{n_2}+\cdots+2^{n_{\nu}}$ with integer exponents $n_1 > n_2 > \cdots > n_{\nu}\ge 0$. For the initial $n$ terms of $Y^{\rm vdC}$ we have 
\begin{align*}
\{y_0,y_1,\ldots,y_{n-1}\} = & \bigcup_{i=1}^{\nu}\left\{y_{\ell}\ : \ \ell \in \{2^{n_1}+\cdots+2^{n_{i-1}},\ldots, 2^{n_1}+\cdots+2^{n_{i-1}}+2^{n_i}-1\}\right\}\\
= & \bigcup_{i=1}^{\nu} \left\{\frac{k}{2^{n_i}}+\frac{1}{2^{n_{i-1}+1}}+\frac{1}{2^{n_{i-2}+1}}+\cdots +\frac{1}{2^{n_1+1}} \ : \ k \in \{0,1,\ldots,2^{n_i}-1\}\right\},
\end{align*}
where we put $2^{n_1}+\cdots+2^{n_{i-1}}:=0$ if $i=1$.

It is well known that the star discrepancy of the van der Corput sequence is nonnegative (see \cite[Remark~3]{p04}) and twice the $L_1([0,1])$-norm of the discrepancy function (see, for example, \cite{PA}). Hence 
\begin{align}\label{stl1}
d_n = & 2 n  \int_0^1 \Delta_{Y^{\rm vdc},n}(t) \rd t = 2 \int_0^1 \sum_{\ell=0}^{n-1} (\boldsymbol{1}_{[0,t)}(y_{\ell}) -t) \rd t\\
= & 2 \sum_{\ell=0}^{n-1} \left(\frac{1}{2}-y_{\ell}\right) = 2 \sum_{i=1}^{\nu} \sum_{k=0}^{2^{n_i}-1} \left(\frac{1}{2}-\frac{k}{2^{n_i}}- \sum_{j=1}^{i-1} \frac{1}{2^{n_j+1}}\right)\nonumber\\
= & 2 \left(\frac{\nu}{2}-\sum_{i=1}^{\nu}2^{n_i} \sum_{j=1}^{i-1} \frac{1}{2^{n_j+1}} \right) = \nu-\sum_{1\le j < i \le \nu}2^{n_i-n_j} =L_n,\nonumber
\end{align}
where the last equality is \eqref{fo:LCW} from Theorem~\ref{thm:fine}.
\end{proof}

The representation $d_n=\nu-\sum_{1\le j < i \le \nu}2^{n_i-n_j}$ for the star discrepancy of $Y^{\rm vdC}$ is also mentioned (in an equivalent form) in \cite[p.~61]{spi}.

\paragraph{Applications.} Theorem~\ref{thm2} can now be used to transfer results for the star discrepancy of the van der Corput sequence to results for the Lebesgue constants for the Walsh system and vice versa. In particular Items~1-4 from Theorem~\ref{thm:fine} by Fine have also been proven in the context of discrepancy of the van der Corput sequence by B\'{e}jian and Faure \cite{befa,befa77}. Therein, the estimate $d_n \le \frac{\log n}{3 \log 2} +1$ from \cite[Th\'eor\`eme~3]{befa77} yields a refinement of item~{\it 2} in Theorem~\ref{thm:fine}. Furthermore, it is known (see \cite[Lemme~4.1]{fau1990}) that for every $r \in \mathbb{N}$ we have $$\max_{n \in [2^{r-1},2^r]} d_n = \frac{r}{3}+\frac{7}{9}+\frac{(-1)^r}{9 \cdot 2^{r-1}}$$ and the maximum is attained for $n=(2^{r+1}+(-1)^r)/3$.  Thus, for all $n$ of this form we have $d_n \ge \frac{\log n}{3 \log 2} +O(1)$. This result has been proven (in a slightly weaker form) for the Lebesgue constants for the Walsh system in \cite[Theorem~2]{AS2}.

From Theorem~\ref{thm2} we also obtain new representations of the Lebesgue constant or star discrepancy of the van der Corput sequence that have not been known so far. For instance, the definition of $L_n$ leads to a new representation of $D_n^{\ast}(Y^{{\rm vdC}})$ via 
\begin{equation*}
D_n^{\ast}(Y^{{\rm vdC}})=\frac{d_n}{n}=\frac{L_n}{n}=\int_0^1 \left|\frac{1}{n}\sum_{k=0}^{n-1} \wal_k(x)\right| \rd x.
\end{equation*}
Using this formula, we can derive another new formula for the star discrepancy of the van der Corput sequence. We have
\begin{align*}
D_n^{\ast}(Y^{{\rm vdC}}) = & \sum_{m=0}^{2^{n_1+1}-1} \int_{m 2^{-n_1-1}}^{(m+1) 2^{-n_1-1}} \left|\frac{1}{n} \sum_{k=0}^{n-1} \wal_k(x) \right| \rd x.
\end{align*}
For $x \in [m 2^{-n_1-1}, (m+1) 2^{-n_1-1})$ we can write $\wal_k(x) = \wal_{m'}(y_k)$, where $m'=m'(m)=m_{n_1}+m_{n_1-1} 2 +\cdots+m_0 2^{n_1}$ for $m$ with binary expansion $m=m_0+m_1 2+\cdots+m_{n_1} 2^{n_1}$. Then
\begin{align*}
D^\ast_n(Y^{{\rm vdC}}) = & \frac{1}{2^{n_1+1}} \sum_{m'=0}^{2^{n_1+1}-1}  \left| \frac{1}{n} \sum_{k=0}^{n-1} \wal_{m'}(y_k) \right|.
\end{align*}

On the other hand, a well known representation for the star discrepancy of the van der Corput sequence according to \cite[Th\'eor\`eme~1]{befa77} leads to a new representation for $L_n$ via 
$$L_n=d_n=n D_n^{\ast}(Y^{{\rm vdC}}) =\sum_{r=1}^{\infty} \left\|\frac{n}{2^r}\right\|  = \sum_{r=1}^m
\left\|\frac{n}{2^r}\right\|+\frac{n}{2^m}\
 \ \ \mbox{ whenever $1 \le n \le 2^m$},$$
where $\|x\|=\min(\{x\},1-\{x\})$ is the distance of a real $x$ to the nearest integer. This formula immediately implies the recursion in Item~1 of Theorem~\ref{thm:fine} (note that $d_1=L_1=1$), which was already known for $d_n$ before (see~\cite[p.~13-07]{befa77}). 

Also the generating function for $L_n$ (see Item~5 in Theorem~\ref{thm:fine}) was unknown in terms of discrepancy, which can now be formulated as  $$\sum_{n=1}^{\infty} d_n z^n =\frac{1}{2} \frac{z}{(1-z)^2} \sum_{k=0}^{\infty}\frac{1}{2^k} \frac{1-z^{2^k}}{1+z^{2^k}} \qquad \mbox{for $|z|<1$.}$$

For the star discrepancy of the van der Corput sequence we know a central limit theorem from \cite{dlp04}, which may now be formulated in terms of Lebesgue constants for the Walsh system. Accordingly, for every real $y$ and for $N \rightarrow \infty$ we have
$$\frac{1}{N} \# \left\{ n < N \ : \  L_n \le \frac{\log n}{4 \log 2} + \frac{y}{4} \sqrt{
\frac{\log n}{3 \log 2}} \right\} = \Phi(y)+ o(1),$$ where 
$\Phi(y) = \frac1{\sqrt{2\pi}} \int_{-\infty}^y \exp(-t^2/2) \rd t $ is the Gaussian cumulative distribution function. 
That is, the Lebesgue constants for the Walsh system satisfy a central limit theorem.

As the final example of this note we reformulate another result for the Lebesgue constants to obtain a so far unknown result for the star discrepancy of the van der Corput sequence. For $t \in [0,1]$ and $m \in \mathbb{N}$ let $n_t(m):= \lfloor 2^m(1+t)\rfloor$. Then \cite[Theorem~5]{AS2} in terms of discrepancy of the van der Corput sequence states:
\begin{enumerate}
\item For almost all $t \in [0,1]$ we have $$\lim_{m \rightarrow \infty} \frac{d_{n_t(m)}}{\log n_t(m) }=\frac{1}{4 \log 2}.$$
\item For all dyadic rational $t \in [0,1]$ we have $$\lim_{m \rightarrow \infty} \frac{d_{n_t(m)}}{\log n_t(m)}=0.$$
\item There exists a dense subset $A \subseteq [0,1]$ such that $$\liminf_{m \rightarrow \infty} \frac{d_{n_t(m)}}{\log n_t(m)}=0 \quad \mbox{ and }\quad \limsup_{m \rightarrow \infty} \frac{d_{n_t(m)}}{\log n_t(m)}=\frac{1}{3 \log 2}$$ for all $t \in A$.
\end{enumerate}

%\vspace{2cm}
%\noindent{\bf Author's Address:}\\

%\noindent Friedrich Pillichshammer, Institut f\"{u}r Finanzmathematik, Universit\"{a}t Linz, Altenbergstra{\ss}e 69, A-4040 Linz, Austria. Email: friedrich.pillichshammer@jku.at\\

\end{document}